\newtheorem{thm}{Theorem}[section]
\newtheorem{lem}[thm]{Lemma}
\newtheorem{obs}[thm]{Observation}
\newtheorem{clm}[thm]{Claim}
\begin{document}
\title{The degree and codegree threshold for linear triangle covering in 3-graphs
\thanks{This work was supported by the National Natural Science Foundation of China (No. 12071453), the National Key R and D Program of China (2020YFA0713100),  Anhui Initiative in Quantum Information Technologies (AHY150200), and the Innovation Program for Quantum Science and Technology (2021ZD0302904).}
}
 \author{Yuxuan Tang$^a$, \quad Yue Ma$^a$, \quad Xinmin Hou$^{a,b}$\\
\small $^a$School of Mathematical Sciences\\
\small University of Science and Technology of China, Hefei, Anhui 230026, China.\\
\small $^{b}$ CAS Key Laboratory of Wu Wen-Tsun Mathematics\\
\small University of Science and Technology of China, Hefei, Anhui 230026, China.\\
\small yx0714@mail.ustc.edu.cn \quad mymy@mail.ustc.edu.cn \quad xmhou@ustc.edu.cn}

\date{}

\maketitle

\begin{abstract}
Given two $k$-uniform hypergraphs $F$ and $G$, we say that $G$ has an $F$-covering if every vertex in $G$ is contained in a copy of $F$.
For $1\le i \le k-1$, let $c_i(n,F)$ be the least integer such that every $n$-vertex $k$-uniform hypergraph $G$ with $\delta_i(G)> c_i(n,F)$ has an $F$-covering. 
The covering problem has been systematically studied by Falgas-Ravry and Zhao [Codegree thresholds for covering 3-uniform hypergraphs,
SIAM J. Discrete Math., 2016].   Last year, Falgas-Ravry, Markstr\"om, and Zhao [Triangle-degrees in graphs and tetrahedron coverings
in 3-graphs, Combinatorics, Probability and Computing, 2021] asymptotically determined $c_1(n, F)$ when $F$ is the generalized triangle.
In this note, we give the exact value of $c_2(n, F)$ and  asymptotically determine $c_1(n, F)$ when $F$ is the linear triangle $C_6^3$, where $C_6^3$ is the 3-uniform hypergraph with vertex set $\{v_1,v_2,v_3,v_4,v_5,v_6\}$ and edge set  $\{v_1v_2v_3,v_3v_4v_5,v_5v_6v_1\}$.
\end{abstract}

\section{Introduction.}	
Given a positive integer $k\ge 2$, a {\it$k$-uniform hypergraph} (or a $k$-graph) $G=(V,E)$ consists of a vertex set $V=V(G)$ and an edge set $E=E(G)\subset \binom{V}{k}$, where $\binom{V}{k}$ denotes the set of all $k$-element subsets of $V$. We write graph for $2$-graph for short. Let $G=(V,E)$ be a simple $k$-graph. For any $S\subseteq V(G)$, let $N_{G}(S)=\{T\subseteq V(G)\backslash S: T\cup S\in E(G)\}$ and the degree $d_{G}(S)=|N_{G}(S)|$. For $1\le i\le k-1$, the {\it minimum $i$-degree} of $G$, denoted by $\delta_i(G)$, is the minimum of $d_G(S)$ over all $S\in\binom{V(G)}{i}$. We also call $\delta_{k-1}(G)$ the minimum codegree of $G$ and $\delta_{1}(G)$ the minimum degree of $G$. The $link\; graph$ of a vertex $x$ in $V$, denoted by $G_x$, is a $(k-1)$-graph $G_x=\{V(G)\backslash \{x\},N_G(x)\}$. 

Given a $k$-graph $F$, we say a $k$-graph $G$ has an $F$-covering if each vertex of $G$ is contained in some copy of $F$. For $1\le i \le k-1$, define
$$c_i(n,F)=\max\{\delta_i(G):G\;\text{is\;a\;$k$-graph\;on\;$n$\;vertices\;with\;no\;$F$-covering}\}.$$ 

For graphs $F$, the $F$-covering problem was solved asymptotically in \cite{ref2} by showing that $c_1(n,F)=\left( \frac{\chi(F)-2}{\chi(F)-1}+o(1)\right) n$, where $\chi(F)$ is the chromatic number of $F$.  Falgas-Ravry and Zhao \cite{ref4} initiated the study of the covering problem in $3$-graphs, and determined the exact value of $c_2(n,K_4^{3})$  (where $K^{3}_r$ denotes the complete 3-graph on $r\ge 3$ vertices) for $n>98$ and gave
bounds on $c_2(n, F)$ which are apart by at most 2 in the cases where $F$ is $K_4^{3-}$ ($K_4^{3}$ with one edge
removed, also called a {\em generalized triangle}), $K_5^{3−}$, and the tight cycle $C_5^{3}$ on 5 vertices. Yu, et al~\cite{ref5} showed that $c_2(n,K_4^{3-})=\lfloor \frac{n}{3} \rfloor$, and $c_2(n,K_5^{3-})=\lfloor \frac{2n-2}{3} \rfloor$. 
Last year, Falgas-Ravry, Markstr\"om, and Zhao~\cite{FMZh21} gave close to optimal bounds of $c_1(n, K^{(3)}_4)$ and asymptotically determined $c_1(n, K_4^{(3)-})$. 
There are some other related results in literature, for example in~\cite{FT22, FZ22}.

A {\em linear triangle} $C_6^3$ is a 3-graph with vertex set $\{v_1,v_2,v_3,v_4,v_5,v_6\}$ and edge set  $\{v_1v_2v_3,v_3v_4v_5,v_5v_6v_1\}$. Gao and Han showed in \cite{ref3} that, when $n\in 6\mathbb{Z}$ is sufficiently large, and $H$ is a 3-graph on $n$ vertices with $\delta_2(H)\ge n/3$, then $H$ has a $C_6^3$-covering such that every vertex $H$ is covered by exactly one $C_6^3$ (also called a $C_6^3$-factor or a perfect $C_6^3$-tiling). In this article, we determine the exact value of $c_2(n, C_6^3)$ and an asymptotic optimal value of $c_1(n, C_6^3)$. The main results are listed as follows. 

%Below, we give our main result:  
\begin{thm}\label{THM: main1}
	For $n\ge 6$, $c_2(n, C_6^3)=1$. 
\end{thm}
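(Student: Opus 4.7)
The plan is to prove the matching bounds $c_2(n,C_6^3)\ge 1$ and $c_2(n,C_6^3)\le 1$ separately. For the lower bound I would exhibit the ``star'' $G^\star$ obtained by fixing a vertex $u\in V$ and taking $E(G^\star)=\{\{u,x,y\}:x,y\in V\setminus\{u\},\,x\ne y\}$. Every pair not containing $u$ lies in exactly one edge and every pair containing $u$ lies in $n-2$ edges, so $\delta_2(G^\star)=1$. Moreover, every edge of $G^\star$ contains $u$ while every vertex of $C_6^3$ lies in at most two edges; three edges through $u$ therefore cannot form a copy of $C_6^3$, so $G^\star$ contains no copy of $C_6^3$ at all and in particular has no $C_6^3$-covering, giving $c_2(n,C_6^3)\ge 1$.

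For the upper bound I would show that every $n$-vertex $3$-graph $G$ with $\delta_2(G)\ge 2$ and $n\ge 6$ covers every vertex by a copy of $C_6^3$. Fixing $v$, I consider the link graph $L_v$, which is a graph on $n-1\ge 5$ vertices with $\delta(L_v)\ge 2$. The first step is to produce a matching $M=\{ab,cd\}$ of size $2$ in $L_v$: a graph of minimum degree at least $2$ on at least four vertices which contains no matching of size $2$ must be a triangle or a star, both excluded by the vertex count and the degree bound. With $M$ in hand, $vab$ and $vcd$ are edges of $G$ meeting only at $v$, and I would search for a third edge $xyz\in E(G)$ with $x\in\{a,b\}$, $y\in\{c,d\}$, and $z\in V\setminus\{v,a,b,c,d\}$: the triple $\{vab,vcd,xyz\}$ is then a copy of $C_6^3$ in which $v$ plays the shared role $v_1$.

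If no such third edge can be found for $M$, then for each of the four pairs $(x,y)\in\{a,b\}\times\{c,d\}$ every edge of $G$ through $\{x,y\}$ has its third vertex inside $\{v,a,b,c,d\}$, and combined with $d_G(x,y)\ge 2$ this forces many of the eight ``internal'' triples $\{vac,vad,vbc,vbd,abc,abd,acd,bcd\}$ to lie in $E(G)$. I would then split into two subcases. If either $\{vac,vbd\}\subseteq E(G)$ or $\{vad,vbc\}\subseteq E(G)$, a new matching of size $2$ in $L_v$ becomes available, and together with the codegree hypothesis applied to pairs $(x,w)$ for an external vertex $w\in V\setminus\{v,a,b,c,d\}$ (which exists because $n\ge 6$) one produces the closing edge. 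Otherwise a double count forces at least three of $\{abc,abd,acd,bcd\}$ to be edges, and I would instead place $v$ in the private role $v_2$ of the linear triangle and assemble the cycle $\{vab,\,b\,v_4\,v_5,\,v_5\,v_6\,a\}$ using one of these internal triples together with an edge through an external vertex $w$ guaranteed by $d_G(a,w),d_G(b,w)\ge 2$. The main obstacle is the last subcase when $L_v$ is ``bipartite-like'' (every vertex outside $\{a,b\}$ has link-neighborhood exactly $\{a,b\}$): the freedom to rechoose the matching largely disappears and one has to extract the third edge from a careful accounting of the codegrees of pairs involving the few external vertices, which is exactly where the hypothesis $n\ge 6$ becomes essential.
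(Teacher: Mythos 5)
Your lower bound is correct and is essentially the paper's Construction~1.

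For the upper bound, the general thrust (find a matching $\{ab,cd\}$ in $L_v$, close it with a third edge through one end of each pair and an external vertex) is a reasonable opening move, and your double count showing that at least three of $\{abc,abd,acd,bcd\}$ lie in $E(G)$ when neither $\{vac,vbd\}$ nor $\{vad,vbc\}$ is present is accurate. However, the argument as written does not close. In the ``new matching'' subcase, the new matching $\{ac,bd\}$ requires an edge through $\{a,b\}$, $\{a,d\}$, $\{c,b\}$, or $\{c,d\}$ with external third vertex; but $\{a,d\}$ and $\{c,b\}$ are already blocked by assumption, and you have no leverage on $\{a,b\}$ or $\{c,d\}$, so the codegrees $d_G(x,w)\ge 2$ with external $w$ do not obviously produce the closing edge. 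In the $q\ge 3$ subcase, placing $v$ in the degree-one role requires an edge $b v_4 v_5$ avoiding $a$ and an edge $v_5 v_6 a$ avoiding $b$, with $v_4,v_5,v_6$ new; your three guaranteed internal triples all pass through $a$ or through both $a,b$, and the needed external vertex $v_6$ is exactly what the bad-case assumption blocks. You explicitly flag the ``bipartite-like'' subcase as unresolved, and I don't see how the accounting you gesture at resolves it. So this is a genuine gap.

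The paper's route is structurally cleaner and avoids this case explosion: it first proves (Lemma 3.1) that if $v$ is uncovered and $\delta_2(G)\ge 2$, then the link $G_v$ is both $P_5$-free and $2P_3$-free. The crucial gain is that in a $2P_3$, the two middle vertices $u_1,u_2$ each have two link-neighbors, and applying the codegree condition to $\{u_1,u_2\}$ yields a third edge $u_1u_2x$ with $x\ne v$; the two spare endpoints of the paths can always be chosen to dodge $x$, which is exactly the flexibility a bare matching lacks. The same two-neighbor trick handles $P_5$. Once $G_v$ is known to be $P_5$-free and $2P_3$-free with minimum degree at least $2$, its components are forced into a very rigid shape ($K_3$'s or a single $C_4$), which is incompatible with $|V(G_v)|\ge 5$. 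If you want to salvage your approach, the right move is to extend your matching to two disjoint $P_3$'s using $\delta(L_v)\ge 2$, which is essentially rediscovering the paper's lemma.
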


\begin{thm}\label{THM: main2}
	For $n\ge 6$, $\frac{3-2\sqrt{2}}{4}n^2-n < c_1(n, C_6^3) < \frac{3-2\sqrt{2}}{4}n^2+3n^\frac{3}{2}$.
\end{thm}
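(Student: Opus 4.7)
The proof splits into matching lower and upper bounds. Since $\frac{3-2\sqrt{2}}{4} = \left(\frac{\sqrt{2}-1}{2}\right)^{2}$, the natural extremal configuration has a link isomorphic to a balanced complete bipartite graph with parts of size roughly $\frac{\sqrt{2}-1}{2}\, n$.

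\textbf{Lower bound.} Set $a = \left\lceil \frac{\sqrt{2}-1}{2} n \right\rceil$ and partition $V(G) = \{v^*\} \sqcup A \sqcup B \sqcup C_A \sqcup C_B$ with $|A| = |B| = a$ and $|C_A|, |C_B|$ as balanced as possible summing to $n - 1 - 2a$. Take $E(G)$ to consist of:
\begin{itemize}
\item $\{v^*, x, y\}$ for every $x \in A$ and $y \in B$;
\item $\{x, c, c'\}$ for every $x \in A$ and $c, c' \in C_A$;
\item $\{y, c, c'\}$ for every $y \in B$ and $c, c' \in C_B$;
\item every triple contained in $C_A \cup C_B$.
\end{itemize}
Then $L_{v^*} = K_{A,B}$ and $d(v^*) = a^{2}$. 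I verify that $v^*$ is in no $C_6^3$ by casework on the role of $v^*$: in the degree-two case ($v^* \in \{v_1, v_3, v_5\}$), the two $v^*$-edges of the $C_6^3$ lie in $K_{A,B}$, which forces the third edge to contain two vertices of $A \cup B$, either in the same part or in opposite parts, but no non-$v^*$ edge has this form; in the degree-one case ($v^* \in \{v_2, v_4, v_6\}$), the two non-$v^*$ edges must share a vertex $v_5$ belonging to both a non-$v^*$ edge through some $x \in A$ (forcing $v_5 \in C_A$) and a non-$v^*$ edge through some $y \in B$ (forcing $v_5 \in C_B$), contradicting $C_A \cap C_B = \emptyset$. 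A direct degree calculation then gives $\delta_1(G) = d(v^*) = a^{2} > \frac{3-2\sqrt{2}}{4} n^{2} - n$.

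\textbf{Upper bound.} Let $G$ have $\delta_1(G) > \frac{3-2\sqrt{2}}{4} n^{2} + 3n^{3/2}$ and fix $v \in V(G)$; I will show $v$ lies in some $C_6^3$. Let $L = L_v$ be its link graph and define the auxiliary graph $H$ on $V \setminus \{v\}$ by $xy \in H$ iff $\{x, y, z\} \in E(G)$ for some $z \ne v$. I will find a $C_6^3$ containing $v$ in the degree-one role: an edge $v_1 v_3 \in L$ and a vertex $v_5$ with $v_1 v_5, v_3 v_5 \in H$ whose witnesses $v_4, v_6$ can be chosen distinct from each other and from $v, v_1, v_3, v_5$. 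Suppose no such configuration exists. Then for each $v_1 v_3 \in L$, the common $H$-neighborhood $N_H(v_1) \cap N_H(v_3)$ has bounded size, the bound coming from witness-collision exceptions. Double-counting pairs $(v_1 v_3, v_5)$ with $v_1 v_3 \in L$ and $v_5 \in N_H(v_1) \cap N_H(v_3)$ and combining with a K\H{o}v\'ari--S\'os--Tur\'an-type estimate on the incidence structure between $L$ and $H$ yields $|L| \le \frac{3-2\sqrt{2}}{4} n^{2} + 3n^{3/2}$, contradicting $d(v) = |L| > \frac{3-2\sqrt{2}}{4} n^{2} + 3n^{3/2}$.

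\textbf{Main obstacle.} The main difficulty lies in the upper bound: formalising the separation of $H$-neighborhoods across $L$-edges, extracting the sharp leading constant $\frac{3-2\sqrt{2}}{4}$, and absorbing the witness-distinctness conditions together with the K\H{o}v\'ari--S\'os--Tur\'an error into the $3n^{3/2}$ additive term. The lower-bound side, once the partition $\{v^*\} \sqcup A \sqcup B \sqcup C_A \sqcup C_B$ is identified, reduces to a structural verification and a degree computation.
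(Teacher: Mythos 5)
Your lower bound is, up to relabelling, identical to the paper's Construction~2: with $A\leftrightarrow A_1$, $B\leftrightarrow A_2$, $C_A\leftrightarrow B_1$, $C_B\leftrightarrow B_2$ and $v^*\leftrightarrow u$, the edge set is the same, and your casework showing that $v^*$ is in no $C_6^3$ is correct. The one loose end is the claim ``$\delta_1(G)=d(v^*)=a^2$''; with your rounding convention this is not self-evident, and in any case what the theorem needs is that every vertex (in $A\cup B$ and in $C_A\cup C_B$ as well, not just $v^*$) has degree exceeding $\tfrac{3-2\sqrt 2}{4}n^2-n$. The paper checks all three vertex classes; you should too, though this is routine.

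The upper bound is where there is a genuine gap. What you have written is a plan, not a proof, and it omits the structural ingredient that actually makes the paper's argument work. You propose to bound, for each link edge $v_1v_3\in L_v$, the size of the common $H$-neighbourhood $N_H(v_1)\cap N_H(v_3)$ via ``witness-collision exceptions'' and then finish with a K\H{o}v\'ari--S\'os--Tur\'an-type count. But non-existence of a $C_6^3$ through $v$ does not give any uniform bound on $|N_H(v_1)\cap N_H(v_3)|$: a vertex $v_5$ can lie in this common neighbourhood yet have all its witnesses collide (e.g.\ the only choice of $v_4$ is $v_1$, the only choice of $v_6$ is $v_3$), and nothing a priori limits how many such $v_5$ there are. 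Nor is it clear what bipartite incidence structure you intend to feed into K\H{o}v\'ari--S\'os--Tur\'an, or how that machinery would produce the sharp constant $\tfrac{3-2\sqrt2}{4}$ rather than a weaker exponent. The paper's route is quite different: it analyses, for each neighbour $v$ of $u$ in the link $G_u$, the link $G_v-u$ and proves (Claim~4.1) that if $d_{G_u}(v)\ge4$ and $u$ is uncovered, then $E(G_v-u)$ is confined to $K_2$-components and isolated vertices of $G_u$ (respectively $G_u-v$). This forces each such $v$ to possess many ``private'' vertices $X(v)$ that are isolated in $G_u$, and disjointness of $X(v_1), X(v_2)$ across link edges (Claim~4.2) forces the isolated set $I_0$ of $G_u$ to be large; Mantel/Tur\'an on the remaining vertices then caps $|E(G_u)|=d(u)$ below the assumed minimum degree. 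Your sketch contains none of this: you would need to either reproduce the link-of-link structural constraint or find a genuinely new way to reach the constant $\tfrac{3-2\sqrt2}{4}$, and as written I do not see how your double-counting plan closes.

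Summary: lower bound essentially matches the paper and is correct modulo the omitted degree check; upper bound is an unproved sketch taking a different and, as stated, unworkable route, with the decisive structural lemma missing.
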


\noindent{\bf Remark:} We believe that  $c_1(n, C_6^3)=\frac{3-2\sqrt{2}}{4}n^2-O(n)$.  

{The rest of the article is arranged as follows. In Section 2, we construct extremal graphs with minimum codegree one that has no $C_6^3$-covering, and minimum degree greater than  $\frac{3-2\sqrt{2}}{4}n^2-n$ that has no $C_6^3$-covering, respectively. The proofs of Theorems~\ref{THM: main1} and~\ref{THM: main2} are given in Sections 3 and 4, respectively.}

\section{Constructions}
We introduce two constructions involving our result. For two families of sets $\mathcal{A}$ and $\mathcal{B}$, define $\mathcal{A}\vee \mathcal{B}=\{A\cup B : A\in\mathcal{A} \text{ and } B\in\mathcal{B}\}$. 

\noindent\textbf{Construction 1: } Let $G_1=(V_1,E_1)$ be a 3-graph with $V_1=\{x\}\cup V'$ and 
$$E_1=\{\{x\}\}\vee {V'\choose 2}.$$
The following observation can be checked directly.
\begin{obs}\label{OBS: G1} 
	$\delta_2(G_1)=1$ and $G_1$ has no $C_6^3$-covering. 
\end{obs}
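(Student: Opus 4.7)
The plan is to verify both parts of the observation by a direct structural inspection of $G_1$, exploiting the fact that every edge of $G_1$ necessarily contains the special vertex $x$.

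First, I would compute $\delta_2(G_1)$ by splitting into cases based on whether the pair contains $x$. For a pair $\{u,v\} \subseteq V'$, the only way to extend it to an edge of $E_1 = \{\{x\}\} \vee \binom{V'}{2}$ is to adjoin $x$, giving $d_{G_1}(\{u,v\}) = 1$. For a pair $\{x, w\}$ with $w \in V'$, any vertex of $V' \setminus \{w\}$ completes it to an edge, yielding degree $|V'|-1 \geq 5$ (since $n \geq 6$). The minimum over all pairs is therefore $1$.

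Second, for the absence of a $C_6^3$-covering I would actually prove the stronger statement that $G_1$ contains no copy of $C_6^3$ at all. The key observation is that every edge of $G_1$ contains $x$, so in any would-be copy of $C_6^3$ all three edges would share the vertex $x$. However, in $C_6^3$ with edge set $\{v_1v_2v_3,\ v_3v_4v_5,\ v_5v_6v_1\}$, the three pairwise intersections of edges are $\{v_3\}$, $\{v_5\}$, and $\{v_1\}$, which are distinct; consequently no vertex of $C_6^3$ belongs to all three edges. This contradiction shows $G_1$ is $C_6^3$-free, and in particular has no $C_6^3$-covering.

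Since the whole argument rests only on the trivial structural fact that edges of $G_1$ form a star through $x$ together with the observation that the three edges of a linear triangle have empty common intersection, there is no genuine obstacle; the writeup is essentially a one-line case check plus a one-line incidence argument.
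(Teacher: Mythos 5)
Your proof is correct and does exactly the direct check the paper leaves to the reader (the paper offers no written proof, only ``can be checked directly''): the codegree computation splits on whether the pair contains $x$, and the stronger claim that $G_1$ is $C_6^3$-free follows because every edge of $G_1$ contains $x$ while the three edges of $C_6^3$ have empty common intersection. One tiny arithmetic slip: with $|V'| = n-1$ and $n \ge 6$, a pair $\{x,w\}$ has codegree $|V'|-1 = n-2 \ge 4$, not $\ge 5$; this is inconsequential since all that matters is that it exceeds $1$.
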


\noindent\textbf{Construction 2: } Let $G_2=(V_2,E_2)$ be a 3-graph with $V_2=\{u\}\cup A_1\cup A_2\cup B_1\cup B_2$, and 
$$E_2=\left(\left\{\{u\}\right\}\vee{A_1\choose 1}\vee{A_2\choose 1}\right)\cup\left({A_1\choose 1}\vee{B_1\choose 2}\right)\cup\left({A_2\choose 1}\vee{B_2\choose 2}\right)\cup {B_1\cup B_2 \choose 3},$$
%\left({B_1\choose 1}\vee{B_2\choose 2}\right)\cup\left({B_1\choose 2}\vee{B_2\choose 1}\right)\cup{B_1\choose 3}\cup {B_2\choose 3}\right\}$
%$E_2=\{ua_1a_2:a_1\in A_1,a_2\in A_2\} \cup \{b_ib_ja_k:b_i,b_j\in B_1,a_k\in A_1\} \cup \{b_sb_ta_r:b_s,b_t\in B_2,a_r\in A_2\} \cup \{b_1b_2b_3:b_1,b_2,b_3\in B_1\} \cup \{b_4b_5b_6:b_4,b_5,b_6\in B_2\} \cup \{b_lb_mb_n:b_l,b_m\in B_1,b_n\in B_2\} \cup \{b_wb_xb_y:b_w,b_x\in B_2,b_y\in B_1\} $, 
where $\left| |A_1|-|A_2|\right| \le 1$, $|B_1|=|B_2|=\lfloor (1-\frac{\sqrt{2}}{2})n\rfloor $. 
\begin{figure}[H]
\centering	 \includegraphics[scale=0.5]{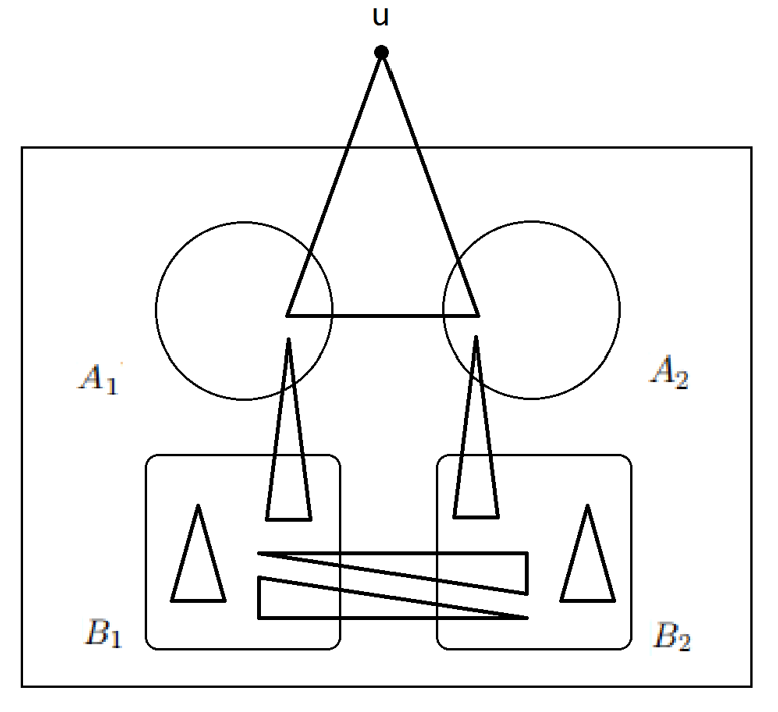}
	\caption{Construction 2}
\end{figure}
\begin{obs}\label{OBS: G2}
	$\delta_{1}(G_2) > \frac{3-2\sqrt{2}}{4}n^2-n$ and $G_2$ has no $C_6^3$ covering $u$. 
\end{obs}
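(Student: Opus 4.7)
The statement has two parts, and I would handle them separately: a degree computation establishing the lower bound on $\delta_1(G_2)$, and a short structural argument showing that $u$ lies in no copy of $C_6^3$.

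For the degree bound, I would compute $d_{G_2}(v)$ for each of the four vertex types directly from the edge description. Writing $\beta=|B_1|=|B_2|=\lfloor(1-\tfrac{\sqrt{2}}{2})n\rfloor$ and $\alpha_i=|A_i|$, this gives $d(u)=\alpha_1\alpha_2$, $d(a)=\alpha_{3-i}+\binom{\beta}{2}$ for $a\in A_i$, and $d(v)=\alpha_i(\beta-1)+\binom{2\beta-1}{2}$ for $v\in B_i$. The $B$-vertices have by far the largest degrees, so the minimum is attained at $u$ or at an $A$-vertex. Since $\alpha_1+\alpha_2=n-1-2\beta=(\sqrt{2}-1)n+O(1)$ with the two parts within $1$ of each other, both $d(u)\approx\bigl(\tfrac{\sqrt{2}-1}{2}n\bigr)^{2}$ and $d(a)\approx\tfrac{1}{2}(1-\tfrac{\sqrt{2}}{2})^{2}n^{2}$ reduce to the same leading term $\tfrac{3-2\sqrt{2}}{4}n^{2}$. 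It then remains only to check that the floor and rounding losses are $O(n)$, comfortably within the slack of $-n$ in the stated bound; this is a routine calculation.

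For the covering claim, the argument rests on two immediate observations about $G_2$: (i) every edge through $u$ meets $A_1$ in exactly one vertex and $A_2$ in exactly one vertex, and (ii) every edge of $G_2$ containing two or more vertices of $A_1\cup A_2$ passes through $u$, since the other edge families $\binom{A_i}{1}\vee\binom{B_i}{2}$ and $\binom{B_1\cup B_2}{3}$ meet $A_1\cup A_2$ in at most one vertex. Suppose for contradiction that some copy of $C_6^3$ with edges $v_1v_2v_3,\,v_3v_4v_5,\,v_5v_6v_1$ contains $u$. If $u$ is one of the degree-$2$ vertices $v_1,v_3,v_5$ of $C_6^3$ (say $u=v_1$), then (i) applied to both edges through $u$ forces $v_2,v_3,v_5,v_6\in A_1\cup A_2$, and then the middle edge $v_3v_4v_5$ has two vertices in $A_1\cup A_2$ but misses $u$, contradicting (ii). If $u$ is one of the degree-$1$ vertices $v_2,v_4,v_6$ (say $u=v_2$), then (i) yields $v_1\in A_i$ and $v_3\in A_{3-i}$ for some $i\in\{1,2\}$; the edges $v_3v_4v_5$ and $v_5v_6v_1$ avoid $u$ and therefore must lie in $\binom{A_{3-i}}{1}\vee\binom{B_{3-i}}{2}$ and $\binom{A_i}{1}\vee\binom{B_i}{2}$, forcing the shared vertex $v_5$ into $B_i\cap B_{3-i}=\emptyset$, a contradiction.

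I expect the main obstacle to be keeping the case analysis clean rather than any real difficulty: observation (ii) is the key combinatorial fact that collapses both cases, while the degree computation is pure arithmetic.
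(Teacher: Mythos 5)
Your proposal is correct and follows essentially the same approach as the paper: the degree bound is verified by the same case-by-case computation over the four vertex types (with the same leading constant $\tfrac{3-2\sqrt{2}}{4}$ arising both from $d(u)\approx\alpha_1\alpha_2$ and from $d(a)\approx\binom{\beta}{2}$), and the paper dismisses the covering claim as immediate. Your two-case structural argument for why $u$ lies in no $C_6^3$ — splitting on whether $u$ plays a degree-2 or degree-1 role and using that edges avoiding $u$ contain at most one vertex of $A_1\cup A_2$ — is a clean and correct expansion of what the paper leaves implicit.
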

\begin{proof}
 It is easy to observe that $G_2$ has no $C_6^3$ covering $u$.  Let $b=|B_1|=|B_2|=\lfloor (1-\frac{\sqrt{2}}{2})n\rfloor$ and $a=\frac{n-1-2b}{2}\ge \frac{(\sqrt{2}-1)n-1}{2}$. 
Without loss of generality, assume $|A_1|=\lfloor a \rfloor $ and $|A_2|=\lceil a \rceil $.
Now let us calculate $\delta_1(G_2)$.
When $n\le 23$, we have $\frac{3-2\sqrt{2}}{4}n^2-n < 0$. So we may suppose $n\ge 24$. Therefore,  $b\ge 2$. We have $|A_1|+|A_2|=n-1-2b$. 
Choose $v\in V(G_2)$.

If $v=u$, then $$d_{G_2}(v)=\lfloor a \rfloor \cdot \lceil a \rceil \ge a^2-\frac{1}{4}\ge \frac{3-2\sqrt{2}}{4}n^2-\frac{(\sqrt{2}-1)n}2> \frac{3-2\sqrt{2}}{4}n^2-n.$$
If $v\in A_1\cup A_2$, then $$d_{G_2}(v)\ge \lfloor a \rfloor +\binom{b}{2}\ge \frac{(\sqrt{2}-1)n-3}{2}+\frac{((1-\frac{\sqrt{2}}{2})n-1)((1-\frac{\sqrt{2}}{2})n-2)}{2} > \ \frac{3-2\sqrt{2}}{4}n^2-n.$$
 If $v\in B_1\cup B_2$, then $$d_{G_2}(v)\ge (b-1)\cdot \lfloor a \rfloor +\binom{2b-1}{2}\ge \lfloor a \rfloor + \binom{b}{2} > \frac{3-2\sqrt{2}}{4}n^2-n.$$ Therefore, $\delta(G_2)>\frac{3-2\sqrt{2}}{4}n^2-n$. 
	
\end{proof}

\section{Proof of Theorem~\ref{THM: main1}}
We first introduce a lemma, which is of great importance to our proof. 
Let $P_k$ (resp. $C_k$) denote a path (resp. a cycle) with $k$ vertices. 

\begin{lem}\label{LEM: lem3.1}
	Let $G$ be a 3-graph with $\delta _2 (G) \ge 2$ and $v\in V(G)$.  If $v$ is not covered by any $C_6^3$, then $G_v$ must be $P_5$-free and $2P_3$-free. 
\end{lem}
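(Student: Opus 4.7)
I would prove the contrapositive: if $G_v$ contains a copy of $P_5$ or of $2P_3$, then $v$ is contained in a copy of $C_6^3$. Unwinding the definition of $C_6^3$ with $v=v_1$, a cover of $v$ amounts to choosing five other vertices $v_2,v_3,v_4,v_5,v_6$ so that the two pairs $\{v_2,v_3\}$ and $\{v_5,v_6\}$ are disjoint edges of $G_v$ and the triple $\{v_3,v_4,v_5\}$ is a hyperedge of $G$, with all six vertices distinct. So the whole game is to pick a central pair $\{v_3,v_5\}$ in $G_v$ that also lies in a hyperedge $\{v_3,v_4,v_5\}$ of $G$, and then extend it to two disjoint link-edges using the surrounding structure.

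\emph{Case 1: $G_v$ contains a $P_5$}, say $a_1a_2a_3a_4a_5$. I set $v_3:=a_2$ and $v_5:=a_4$. By $\delta_2(G)\ge 2$ the pair $\{a_2,a_4\}$ has at least two common neighbors in $G$; at most one of them is $v$, so there exists $v_4\notin\{v,a_2,a_4\}$ with $\{a_2,v_4,a_4\}\in E(G)$. The $P_5$ supplies three candidate outer pairs $(v_2,v_6)\in\{(a_1,a_3),(a_1,a_5),(a_3,a_5)\}$, each consisting of two $G_v$-edges meeting $a_2$ and $a_4$ respectively. A small case analysis on which (if any) of $a_1,a_3,a_5$ equals $v_4$ shows that at least one of the three pairs gives six distinct vertices, yielding a valid $C_6^3$ through $v$.

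\emph{Case 2: $G_v$ contains a $2P_3$}, on disjoint triples $b_1b_2b_3$ and $c_1c_2c_3$. I set $v_3:=b_2$ and $v_5:=c_2$. Again by $\delta_2(G)\ge 2$ there is $v_4\notin\{v,b_2,c_2\}$ with $\{b_2,v_4,c_2\}\in E(G)$. There are now four outer pairs $(v_2,v_6)\in\{b_1,b_3\}\times\{c_1,c_3\}$, and since $v_4$ is a single vertex, at least one of them avoids $v_4$ and produces a $C_6^3$ on six distinct vertices.

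\textbf{Main obstacle.} There is no deep difficulty here; the only real task is the bookkeeping of distinctness of the six vertices. The codegree hypothesis $\delta_2(G)\ge 2$ is invoked exactly once in each case, to rule out the possibility that $v$ is the unique hyperedge-extension of the central pair $\{v_3,v_5\}$. The multiplicity of outer pairs provided by $P_5$ (three) and $2P_3$ (four) is then exactly what absorbs the remaining incidental coincidences between $v_4$ and the path endpoints, completing the construction.
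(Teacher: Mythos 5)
Your proposal is correct and follows essentially the same argument as the paper: in both cases you anchor the two degree-two path vertices (the second and fourth vertex of the $P_5$, or the centers of the two $P_3$'s) as the middle pair of the linear triangle, invoke $\delta_2(G)\ge 2$ to get a third vertex $x\ne v$ completing a hyperedge on that pair, and then use the surplus of available path endpoints to choose the two outer link-edges of $G_v$ avoiding $x$. The paper carries out exactly the same case analysis on which endpoint $x$ collides with (in the $P_5$ case it falls back to the middle vertex $w=a_3$ of the path, which is one of your three outer pairs), so the two proofs are the same up to notation.
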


\begin{proof}
In contrast, let us first suppose that $G_v$ contains a $2P_3$, say, $w_1u_1w_2$ and $w_3u_2w_4$. Since $\delta _2 (G) \ge 2$, there exists a vertex $x \ne v$, $u_1u_2x\in E(G)$. 
	Since $\{w_1,u_1,w_2\}\cap\{w_3, u_2, w_4\}=\emptyset$, we may choose $w_2$ and $w_3$ that are different from $x$. Then the subgraph induced by $\{ v,w_2,u_1,x,u_2, w_3\}$ contains a $C_6^3$. This gives the conclusion that  $G_v$ does not contain a $2P_3$.
%	If $x\notin \{w_1,w_2,w_3,w_4\}$, then it is obvious that $\{ v,w_1,u_1,x,u_2, w_3\}$ forms a $C_6^3$, where the edges are $vw_1u_1$, $u_1xu_2$, $u_2w_3v$; if $x\in \{w_1,w_2,w_3,w_4\}$, without loss of generality, we may assume that $x = w_1$, then $\{ v,w_2,u_1,x,u_2, w_3\}$ forms a $C_6^3$, where the edges are $vw_2u_1$, $u_1xu_2$, $u_2w_3v$. This gives the conclusion that  $G_v$ does not contain a $2P_3$.
	
	%For the condition of $P_5$, similarly 
	Now suppose that $G_v$ contains a $P_5$, say, $w_1u_1wu_2w_2$. Then similarly, there exists a vertex $x \ne v$, $u_1u_2x\in E(G)$. If $x \notin \{w_1,w_2\}$, then the subgraph induced by  $\{ v,w_1,u_1,x,u_2, w_3\}$ contains a $C_6^3$.
	%, where the edges are $vw_1u_1$, $u_1xu_2$, $u_2w_3v$; if $x=w$, then $vw_1u_1$, $u_1wu_2$, $u_2w_2v$ forms a $C_6^3$;
	If $x \in \{w_1,w_2\}$, without loss of generality, assume $x=w_1$, then the subgraph induced by $\{v, w, u_1, w_1, u_2, w_2\}$ contains a $C_6^3$. This gives the conclusion that $G_v$ is $P_5$-free. 
	%By the two conditions above, we may achieve the result of our lemma. 
\end{proof}

Now we are ready to finish the proof of Theorem~\ref{THM: main1}. 
\begin{proof}[Proof of Theorem~\ref{THM: main1}]
	Suppose to the contrary that there is a 3-graph $G$ with $\delta_2(G)\ge 2$ and a vertex $v\in V(G)$ that is not covered by $C_6^3$.
	Since $\delta _2 (G) \ge 2$, consider any vertex $u \in V(G_v)$, $|N_G(u,v)|\ge 2$, which means $|N_{G_v}(u)|\ge 2$. Therefore,  $\delta (G_v) \ge 2$.
	%then we can at least find another two vertices $w$ and $x$, such that $uvw$ and $uvx$ form two 3-edges of $G$, so $w$ and $x$ are neighbors of u in $G_v$, which means $\delta (G_v) \ge 2$. 
	
	By Lemma~\ref{LEM: lem3.1},  $G_v$ is $P_5$-free and $2P_3$-free. Therefore, the longest path in $G_v$ must be $P_4$ or $P_3$ (otherwise, $G_v$ must be a matching, which contradicts the fact that  $\delta(G_v)\ge 2$).
%	 here because if there only exists $P_2$, then every component of $G_v$ contains at most two vertices, which leads to $\delta (G_v)=1$, this contradicts to the condition that  $\delta (G_v) \ge 2$). 
	
Note that every component of $G_v$ contains a cycle since $\delta(G_v)\ge 2$. 	If the longest path is $P_3$ in $G_v$, then every component of $G_v$ must be a $K_3$. %let us consider one of the components of $G_v$, which contains at least 3 vertices. If it is not $K_3$, in order that there isn't any path longer than 3 in $G_v$, this component must be a star, but this contradicts to $\delta (G_v) \ge 2$. 
Since $|V(G_v)|\ge 5$, we have $G_v\cong tK_3$ for some $t\ge 2$. However, this contradicts the fact that $G_v$ is $2P_3$-free. 
	
Now suppose the longest path is $P_4$ in $G_v$.  Consider the component $H$ that contains $P_4=u_1w_1w_2u_2$. 
%Obviously, any other vertex in this component can't be adjacent to $u_1$ and $u_2$, otherwise a $P_5$ is formed. 
Since $P_4$ is a longest path and $d_{H}(u_1) \ge 2$ and $d_{H}(u_2) \ge 2$, it can be easily checked that $C_4\subseteq H$, and thus, $V(H)=V(P_4)$ (otherwise, $H$ contains a $P_5$, a contradiction). Since $|V(G_v)|\ge n-1\ge 5$, $G_v$ has at least another component $H'$. Because every component of $G_v$ contains a cycle, we have $|V(H')|\ge 3$, a contradiction to the fact that $G_v$ is $2P_3$-free. 
%	\\
%	\textbf{Case 1:} $u_1$ is adjacent to $u_2$, which gives that $u_1w_1w_2u_2$ forms a $C_4$. Now, this component contains only four vertices, or a $P_5$ can be formed. In consideration of the lack of vertices, there exists other components, whose structure contain $P_3$ since $\delta (G_v) \ge 2$. Now we obtain a $2P_3$ in $G_v$, a contradiction; 
%	\\
%	\textbf{Case 2:} $u_1$ is adjacent to $w_1$ and $u_2$ is adjacent to $w_2$, similarly because of the lack of the vertices, there still exists another component that contains a $P_3$, a contradiction.
%	\\
%	Synthesize all the conditions above, it is clear that the hypothesis can't be real. And this finishes the proof of our main theorem. 
\end{proof}

\section{Proof of Theorem~\ref{THM: main2}}
We need the fundamental result in extremal graph theory due to Mantel~\cite{M07} and Tur\'an~\cite{ref6}. 
\begin{thm}[Mantel, 1907, Tur\'an, 1941]\label{THM: Turan} 
Let $r\ge 2$ and $T_{n,r}$ be the Tur\'an graph, i.e., the complete $r$-partite graph on $n$ vertices with the size of each part differing by at most one.
%	$$\ex(n, K_{r+1})=e(T_{n,r}),$$ 
If $G$ is a graph containing no $K_{r+1}$ as a subgraph, then $e(G)\le e(T_{n,r})$.
\end{thm}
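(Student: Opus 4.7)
The plan is to prove Turán's theorem via the classical Zykov symmetrization argument, reducing any edge-extremal $K_{r+1}$-free graph to a complete multipartite graph with balanced parts. Let $G$ be a $K_{r+1}$-free graph on $n$ vertices with the maximum possible number of edges; the goal is to show $e(G)\le e(T_{n,r})$. The strategy is first to argue that $G$ may be taken to be complete multipartite with at most $r$ parts, and then to optimize over the part sizes.

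For the symmetrization step, introduce the cloning operation: if $u$ and $v$ are non-adjacent, delete every edge incident to $v$ and add edges joining $v$ to every vertex of $N_G(u)$. The resulting graph remains $K_{r+1}$-free, since $u$ and the new $v$ are non-adjacent twins and any clique containing the new $v$ can be re-routed through $u$. Edge-extremality of $G$ then forces $d(u)=d(v)$ for every non-adjacent pair. To show that non-adjacency is transitive, suppose $uv, vw\notin E(G)$ but $uw\in E(G)$, and write $d$ for the common degree of $u,v,w$. Cloning $v$ onto both $u$ and $w$ simultaneously makes $\{u,v,w\}$ into an independent set whose members share the common neighborhood $N_G(v)$, and the new graph is still $K_{r+1}$-free by the same twin argument. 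A direct count shows $3d-1$ edges are deleted (the $-1$ accounting for the edge $uw$) and $3d$ are added, a net gain of one edge, contradicting extremality. Hence non-adjacency is an equivalence relation, its classes partition $V(G)$ into independent sets, and $G$ is complete multipartite with at most $r$ parts, for otherwise one could select one vertex from each part and obtain a $K_{r+1}$.

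It remains to optimize the sizes of the (at most $r$) parts. Allowing empty parts, assume there are exactly $r$ parts $V_1,\dots,V_r$. If $|V_i|\ge |V_j|+2$ for some $i,j$, then moving one vertex from $V_i$ to $V_j$ changes the edge count by $|V_i|-|V_j|-1\ge 1$, strictly increasing it. Iterating this smoothing move, every pair of parts differs in size by at most one, so $G$ coincides (up to isomorphism) with $T_{n,r}$, giving $e(G)=e(T_{n,r})$, as required.

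The delicate step is the transitivity argument in the symmetrization: one must verify both that the simultaneous double cloning preserves $K_{r+1}$-freeness, and that the edge-count bookkeeping around the triangle $u,v,w$ is correct; the point is that $uw$ is the unique edge induced on $\{u,v,w\}$ in $G$, so cleanly $3d-1$ edges disappear and $3d$ reappear, producing the critical $+1$ contradiction. Once this is in hand, the reduction to a complete multipartite graph and the balancing of part sizes are essentially routine.
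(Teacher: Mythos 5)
Your proof is correct. Note, however, that the paper does not prove this statement at all: it is quoted as the classical Mantel--Tur\'an theorem with citations to the original 1907 and 1941 papers, and is used as a black box in the proof of Theorem 4.2. So there is no in-paper argument to compare against; what you have supplied is a self-contained proof where the authors rely on the literature. Your argument is the standard Zykov symmetrization: take an edge-maximal $K_{r+1}$-free graph, use cloning to show non-adjacent vertices have equal degree, use the double-clone on a path $u$--$w$ in the ``non-adjacency complement'' to show non-adjacency is transitive (your bookkeeping is right: the edges meeting $\{u,v,w\}$ number $3d-1$ before, since $uw$ is the unique edge induced on the triple, and $3d$ after, since the triple becomes independent with common neighbourhood $N_G(v)$, which avoids all of $u,v,w$), conclude the extremal graph is complete multipartite with at most $r$ parts, and balance the parts by the smoothing move with gain $|V_i|-|V_j|-1\ge 1$. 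All steps, including the verification that cloning preserves $K_{r+1}$-freeness via the twin argument, are sound. This is one of several standard proofs (others being Tur\'an's induction on $n$ by deleting a $K_r$, or the weight-shifting/probabilistic argument of Motzkin--Straus); any of them would serve equally well here, since the paper only needs the inequality $e(G)\le e(T_{n,r})\le \frac{r-1}{2r}n^2$ for $r=2,3$.
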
 
Now we give the proof of Theorem~\ref{THM: main2}.
\begin{proof}[Proof of Theorem~\ref{THM: main2}:]
	The lower bound of $c_1(n, C_6^3)$ is a direct corollary of Observation~\ref{OBS: G2}. 
Therefore, it is sufficient to show that every 3-graph $G$ on $n$ vertices with $\delta_1(G)\ge  \frac{3-2\sqrt{2}}{4}n^2+3n^\frac{3}{2}$ has a $C_6^3$-covering.

Suppose to the contrary that there is a 3-graph $G$ on $n$ vertices with $\delta_1(G)\ge  \frac{3-2\sqrt{2}}{4}n^2+3n^\frac{3}{2}$ and a vertex $u\in V(G)$ that is not contained in a copy of $C_6^3$. 
%We consider the link graph of $u$ which denoted as $G_u=\{V',E\}$,where $V'=V \setminus u$, and $E$ contains all the edges $v_iv_j$ that form a 3-edge with $u$ in $G$. We say a vertex $w$ is $isolated$ in $G_u$ if $w$ is not adjacent to any vertex in $G_u$, and an edge $w_pw_q$ is $isolated$ in $G_u$ if $w_p$ and $w_q$ is not adjacent to any other vertices in $G_u$ except each other. 
%Recall that $G_u$ is the link graph of $u$. 
Recall that the link graph $G_u$ is the graph with vertex set $V'=V \setminus\{u\}$ and edge set $E=\{vw : uvw\in E(G)\}$. 
%Clearly, the link graph $G_u\subseteq H$ and $E(H)=E(G_u)$.
Denote $M_0$ as the set of components isomorphic to $K_2$ of $G_u$ and let $I_0$ be the set of components isomorphic to $K_1$ in $G_u$.
For any $v\in V(G_u)$, let $M(v)$ denote the set of components isomorphic to $K_2$ in $G_u-\{v\}-V(M_0)$, and let $I(v)$ denote the set of components isomorphic to $K_1$ in $G_u-\{v\}-I_0$.  Note that $M_0\cap M(v)=\emptyset$ and $I_0\cap I(v)=\emptyset$ by the definitions.
%Define a vertex $v\in V(G_u)$ to be a {\em good} vertex if $|I(v)|< n^\frac{1}{2}$; a {\em bad} vertex, otherwise.
 
%    We may first introduce a lemma: 
    \begin{clm}\label{CLM: 4.1}
%		For any ``good" vertex $x$ in $G_u$, if $d_{G_u}(x) \ge 4$ and $u$ is not covered by any $C_6^3$, then only two kinds of pairs in $\binom{V(G)}{2}$ can formulate a $3$-edge in $G$ with $x$: \\
%		1. The original edges of $M_0$ and $M(x)$; \\
%		2. The pair from the set $I_0\cup I(x)$. 
Let $v$ be a vertex in $G_u$ with $d_{G_u}(v)\ge 4$. Then 
$$E(G_v-u)\subseteq M_0\cup M(v)\cup{I_0\cup I(v)\choose 2}.$$  
	\end{clm}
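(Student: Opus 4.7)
The plan is to take any $wx \in E(G_v - u)$ (so $vwx \in E(G)$ and $u \notin \{v, w, x\}$) and show directly that $wx \in M_0 \cup M(v) \cup \binom{I_0 \cup I(v)}{2}$, by extracting tight neighbourhood constraints on $w$ and $x$ in $G_u$ from the standing assumption that $u$ lies in no copy of $C_6^3$.

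The key construction is a \emph{gluing observation}: whenever $va, xb \in E(G_u)$ with $a, b \notin \{v, w, x\}$ and $a \ne b$, the six distinct vertices $\{u, a, v, w, x, b\}$ with edges $\{uav,\; vwx,\; xbu\}$ form a copy of $C_6^3$ covering $u$; two symmetric versions replace $xb$ by $wb$, or use $wa,\; xb$ together. Under our standing assumption all three of these ``corner assignments'' for the outer edge $vwx$ must be blocked. Now $d_{G_u}(v) \ge 4$ guarantees $|N_{G_u}(v) \setminus \{w, x, b\}| \ge 1$ for every fixed $b$, so the required partner $a$ on the $v$-side is always available; hence the two assignments in which $v$ is a corner can fail only if
\[
N_{G_u}(x) \setminus \{v, w\} = \emptyset \qquad \text{and} \qquad N_{G_u}(w) \setminus \{v, x\} = \emptyset,
\]
i.e., $N_{G_u}(w) \subseteq \{v, x\}$ and $N_{G_u}(x) \subseteq \{v, w\}$.

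A case split on whether $wx \in E(G_u)$ then finishes the proof. If $wx \in E(G_u)$ and neither endpoint additionally sees $v$, then $\{w, x\}$ is already a $K_2$-component of $G_u$, giving $wx \in M_0$. If $wx \in E(G_u)$ and at least one endpoint does see $v$, then that endpoint's component has at least three vertices, so $w, x \notin V(M_0)$; after deleting $\{v\} \cup V(M_0)$ the pair $\{w, x\}$ is left adjacent only to each other, so it is a $K_2$-component of $G_u - \{v\} - V(M_0)$, giving $wx \in M(v)$. If $wx \notin E(G_u)$, then $N_{G_u}(w), N_{G_u}(x) \subseteq \{v\}$, so each of $w$ and $x$ is either isolated in $G_u$ (lying in $I_0$) or has $v$ as its unique $G_u$-neighbour (lying in $I(v)$ after deletion of $\{v\} \cup I_0$), whence $wx \in \binom{I_0 \cup I(v)}{2}$.

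The main obstacle I expect is bookkeeping: verifying that the six vertices of each candidate $C_6^3$ are genuinely pairwise distinct, and carefully unpacking the definitions of $M(v)$ and $I(v)$, which delete specific unions of vertex sets from $G_u$. The hypothesis $d_{G_u}(v) \ge 4$ is invoked exactly once, and only to guarantee the free third vertex $a$ in the gluing whenever $v$ serves as a corner of the candidate $C_6^3$.
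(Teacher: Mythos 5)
Your proof is correct, and it rests on the same key mechanism as the paper's: build a copy of $C_6^3$ through $u$ from three edges of the form $uav$, $vwx$, $xbu$ (or the $w$-cornered variant), using $d_{G_u}(v)\ge 4$ to guarantee a free partner $a$ for $v$, thereby forcing $N_{G_u}(w)\subseteq\{v,x\}$ and $N_{G_u}(x)\subseteq\{v,w\}$. Where you differ is in the bookkeeping: the paper argues by contradiction and immediately asserts that $v_1\notin I_0\cup I(v)$ forces $v_1\in V(M_0)\cup V(M(v))$, a step that does not follow directly from the definitions (a vertex outside $I_0\cup I(v)$ merely has some $G_u$-neighbour other than $v$); it then extracts a partner $v_1'$ from the $K_2$-component. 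Your version instead derives the neighbourhood constraints first and then splits on whether $wx\in E(G_u)$, explicitly checking in each branch that the pair lands in $M_0$, $M(v)$, or $\binom{I_0\cup I(v)}{2}$. This is slightly longer but actually cleaner: it supplies the case analysis that the paper's ``Thus $v_1\in V(M_0)\cup V(M(v))$'' elides, in particular the situation where $v_1$'s only $G_u$-neighbour besides $v$ is $v_2$ itself. Both approaches use the degree hypothesis in exactly the same single place, and your distinctness checks for the six vertices are all correct.
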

    \begin{proof}
Choose $v_1v_2\in E(G_v-u)$. If $v_1v_2\notin M_0\cup M(v)\cup{I_0\cup I(v)\choose 2}$, then there is at least one of $v_1, v_2$, say $v_1$, that is not in $I_0\cup I(v)$. Thus $v_1\in V(M_0)\cup V(M(v))$. Therefore, there is a vertex $v_1'$ different  from $v_2$ such that $v_1v_1'\in M_0\cup M(v)$.
Since $d_{G_u}(v)\ge 4$, there is another vertex $v'$ different from $v_1, v_1'$ and $v_2$ such that $vv'\in E(G_u)$. Therefore, the edges $uv_1'v_1$, $v_1v_2v$ and $vv'u$ form a copy of $C_6^3$ covering $u$ in $G$, a contradiction.

    \end{proof}    
%    Now let us continue our proof of the main theorem.% We assume that all the vertices mentioned below have the same property——they are all ``good" vertices and their degree are all more than 4.  %According to the lemma, for some $v_1$, we have
%	$$d_G(v_1) \le d_{G_u}(v_1)+\frac{|M_0|+|M(v_1)|}{2}+\left|N_G(v_1)\cap \binom{I_0\cup I(v_1)}{2}\right|. $$
%	where $d\left( v_1,\binom{I_0\cup I(v_1)}{2}\right) $ denotes the 3-edges in the form of $v_1j_1j_2$, and here $j_1$ and $j_2$ are chosen from $I_0\cup I(v_1)$ arbitrarily.
%In this fomula, $d_{G_u}(v_1)+\frac{|M_0|+|M(v_1)|}{2}\le \frac{3}{2}n$, so we will focus on $d\left( v_1,\binom{I_0\cup I(v_1)}{2}\right)$. 

Now, define a vertex $v\in V(G_u)$ to be a {\em good} vertex if $|I(v)|< n^\frac{1}{2}$; a {\em bad} vertex, otherwise.	
For a good vertex $v$, a vertex $w$ in $G_v$ is called {\em a private} vertex of $v$ if $w\in I_0\cup I(v)$ and $d_{G_v-u}(w)\ge 2$. Let $X(v)$ denote the set of all private vertices of $v$, i. e. 
	$$ X(v)=\{ w\in V(G_v) : w\in I_0\cup I(v) \text{ and } d_{G_v-u}(w)\ge 2\}. $$ 
	
   For a good vertex $v$ with $d_{G_u}(v)\ge 4$, let $x=|X(v)|$ and let $J(v)=(I_0\cup I(v))\backslash X(v)$. Denote $H=G_{v}-u$.   By Claim~\ref{CLM: 4.1}, we have
    $$d_G(v) \le d_{G_u}(v)+|M_0|+|M(v)|+\left|E(H)\cap\binom{I_0\cup I(v)}{2}\right|. $$
    By the definition of $X(v)$, $d_H(w)\le 1$ for any $w\in J(v)$. Thus $\left|E(H)\cap\binom{I_0\cup I(v)}{2}\right|\le \sum_{w\in J(v)}d_H(w)+|{X(v)\choose 2}|\le |I_0|+|I(v)|-x+{x\choose 2}$. 
    %So $\left|E(H)\cap\binom{I_0\cup I(v_1)}{2}\right|\le \frac{|I_0|+|I_v|-x_1}2+{x_1\choose 2}$.
   % they contribute at most $n-x_1$ edges since the number of edges must be less than the summation of the degree of all vertices..
  Clearly, $d_{G_u}(v)\le n$ and $|M_0|+|M(v)|\le\frac{n-1-|I_0|-|I(v)|}2$. 
  %Let $J(v_1)=I_0\cup I(v_1)\backslash X(v_1)$. By the definition of private vertices, all the vertices in $J(v_1)$ has degree at most 1 in $G_{v_1}-u$. So they contribute at most $n-x_1$ edges since the number of edges must be less than the summation of the degree of all vertices. So} 
  Therefore,   $$d_{G_u}(v) <2n + \binom{x}{2}. $$
Note that  
    $$\frac{3-2\sqrt{2}}{4}n^2+3n^{\frac{3}{2}}\le \delta_1(G) \le d_G(v) < 2n + \binom{x}{2}.$$
Therefore, we have  
\begin{equation}\label{EQN: e1}  
	 |X(v)|=x\ge \frac{1+\sqrt{(2-\sqrt{2})^2n^2+24n^\frac{3}{2}-12n+1}}{2}\ge\left(1-\frac{\sqrt{2}}{2}\right)n+n^\frac{1}{2}.
\end{equation}
    
    Now, let us compute the number of edges of $G_u$. Define a {\em bad edge} in $G_u$ as an edge that is adjacent to at least one bad vertex, and a {\em good edge} if its two ends are good. Let $E_1$ denote the set of bad edges in $G_u$,
    % which are connected to at least one ``bad" vertex, 
    let $E_2$ be the set of good edges with one end of degree at most 3, and let $E_3$ denote the remaining edges in $G_u$.
    %formed by ``good" vertices with their degree at least 4. 
    Then $|E(G_u)|=|E_1|+|E_2|+|E_3|$. 
    By the definition of $E_2$, we have $|E_2|\le 3n$.
    Note that $I(v_1)\cap I(v_2)=\emptyset$ for different vertices $v_1, v_2\in V(G_u)$. 
Since $|I(v)|\ge n^{\frac 12}$ for a bad vertex $v\in V(G_u)$,  the number of the bad vertices is at most  $n/n^\frac{1}{2}=n^\frac{1}{2}$.
Therefore, they will contribute at most $n^\frac{3}{2}$ edges. That is, $|E_1|\le n^\frac{3}{2}$.    
%    {\color{red} For ``bad" vertices, their {\em generating isolated vertex sets} are disjoint. Since the size of the isolated vertex set of a ``bad" vertex will be at least $n^\frac{1}{2}$, so the number of the ``bad" vertices is at most  $n/n^\frac{1}{2}=n^\frac{1}{2}$,} so they will contribute at most $n^\frac{3}{2}$ edges. That is to say, $|E_1|\le n^\frac{3}{2}$.
% For ``good" vertices whose degree are at most 3, at most $3n$ edges will form since they can only form edges among themselves. This means $|E_2|\le 3n$. 
Since $n^\frac 32+3n+|E_3|=|E_1|+|E_2|+|E_3|=d_1(u)\ge\delta_1(G)\ge \frac{3-2\sqrt{2}}{4}n^2+3n^{\frac{3}{2}}$, we have  $E_3\not=\emptyset$.   
%    \color{red}Now, except for the cases above, for ``good" vertices with their degree at least 4, if there exists no pair of such ``good" vertices that are adjacent, then $E_3=\emptyset$, so $|E(G_u)|=|E_1|+|E_2|+|E_3|\le 3n+n^\frac{3}{2}$, and this means the number of edges of $G_u$ will be not enough, a contradiction. So there exists such ``good" vertices that are adjacent. \\
%    We claim as below: \\
	\begin{clm}\label{CLM: 4.2}
		 For any $e=v_1v_2\in E_3$, $X(v_1)\cap X(v_2)=\emptyset$. 
\end{clm}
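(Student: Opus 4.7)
The plan is to argue by contradiction: assume some $w\in X(v_1)\cap X(v_2)$ and exhibit a copy of $C_6^3$ covering $u$. First I would observe that any vertex in $I(v_i)$ must have $v_i$ as its unique $G_u$-neighbour, so the three sets $I_0,I(v_1),I(v_2)$ are pairwise disjoint and
\[
(I_0\cup I(v_1))\cap (I_0\cup I(v_2))=I_0;
\]
hence $w\in I_0$, i.e.\ $w$ is isolated in $G_u$, and in particular $w\notin\{v_1,v_2\}$.

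My first attempt uses the two $G_{v_i}-u$-neighbours of $w$ guaranteed by $w\in X(v_i)$. I would pick $a\in N_{G_{v_1}-u}(w)\setminus\{v_2\}$ and $b\in N_{G_{v_2}-u}(w)\setminus\{v_1\}$ with $a\ne b$; then the three edges $\{u,v_1,v_2\}$, $\{v_1,w,a\}$, $\{v_2,w,b\}$ pairwise share exactly one of $v_1,v_2,w$ and so form a $C_6^3$ on the six distinct vertices $u,v_1,v_2,w,a,b$ covering $u$.

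Such a choice fails only when $N_{G_{v_1}-u}(w)\setminus\{v_2\}=N_{G_{v_2}-u}(w)\setminus\{v_1\}=\{c\}$ is a common singleton, which forces $v_2\in N_{G_{v_1}-u}(w)$ (and symmetrically $v_1\in N_{G_{v_2}-u}(w)$), i.e.\ $\{v_1,v_2,w\}\in E(G)$ together with $\{v_1,w,c\},\{v_2,w,c\}\in E(G)$. To handle this exceptional case I would switch to using the hypothesis $d_{G_u}(v_1),d_{G_u}(v_2)\ge 4$ built into the definition of $E_3$. Since $w\in I_0$ has no $G_u$-neighbours, I can pick $v'\in N_{G_u}(v_1)\setminus\{v_2,c\}$ and then $m_3\in N_{G_u}(v_2)\setminus\{v_1,c,v'\}$; the bound $\ge 4$ on each link degree leaves at least one choice at each step. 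Then $\{u,v_1,v'\}$, $\{v_1,v_2,w\}$, $\{u,v_2,m_3\}$ form a $C_6^3$ on six distinct vertices covering $u$, again a contradiction.

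I expect the main obstacle to be identifying the exceptional case cleanly (namely the situation in which $\{v_1,v_2,w\}$ itself is an edge of $G$) and verifying that in it the degree-$\ge 4$ hypothesis carried by the endpoints of an $E_3$-edge is exactly what is needed to reroute the linear triangle: instead of going through $w$ on both sides of $v_1v_2$, we use $\{v_1,v_2,w\}$ as the middle edge and attach two fresh $G_u$-edges at $v_1$ and at $v_2$.
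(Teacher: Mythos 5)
Your proof is correct and uses the same core idea as the paper: build a linear triangle through $u,v_1,v_2,w$ from the edge $uv_1v_2$ together with two link-edges at $w$. In fact your write-up is more careful than the paper's. The paper asserts that there exist two different vertices $y_1,y_2$ with $y_1w\in E(G_{v_1}-u)$ and $y_2w\in E(G_{v_2}-u)$ and immediately reads off the copy $v_1y_1w,\ wy_2v_2,\ v_2uv_1$; it does not address the degenerate situation where $N_{G_{v_1}-u}(w)\setminus\{v_2\}$ and $N_{G_{v_2}-u}(w)\setminus\{v_1\}$ are the same singleton, which is exactly the case $v_1v_2w\in E(G)$. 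You correctly isolate this case and handle it by using the degree hypothesis $d_{G_u}(v_1),d_{G_u}(v_2)\ge 4$ that is built into the definition of $E_3$: you keep $v_1v_2w$ as the middle edge and attach two fresh edges $uv_1v'$ and $uv_2m_3$ from $N_{G_u}(v_1)$ and $N_{G_u}(v_2)$. Your preliminary observation that $w\in I_0$ (from the disjointness of $I_0, I(v_1), I(v_2)$) is also correct and gives a clean reason why $w\notin\{v_1,v_2\}$ and why $w$ is not a $G_u$-neighbour of $v_1$ or $v_2$, so the six vertices in the second construction are automatically distinct. In short: same route as the paper, but your version actually closes a small gap in the published argument.
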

\begin{proof}
Suppose to the contrary that there exists $w\in X(v_1)\cap X(v_2)$. Since $d_{G_{v_i}-u}(w)\ge 2$ for $i=1,2$, there are two different vertices $y_1, y_2$ with $y_1w\in E(G_{v_1}-u)$ and $y_2w\in E(G_{v_2}-u)$. Therefore, $v_1y_1w$, $wy_2v_2$ and $v_2uv_1$ form a copy of $C_6^3$, a contradiction.

	 %   {\color{red}On the contrary, suppose that there exists $w\in X(v_1)\cap X(v_2)$. Since its degree is at least 2, let us suppose $y_1$ is adjacent to $w$ in $G_{v_1}$, $y_2$ is adjacent to $w$ in $G_{v_2}$, $y_1\neq y_2$. then $v_1y_2w$, $wy_2v_2$ and $v_2uv_1$ will form a $C_6^3$, a contradiction.}
\end{proof}
Let $e=v_1v_2\in E_3$ and $x_i=|X(v_i)|$ for $i=1,2$. By (\ref{EQN: e1}) and Claim~\ref{CLM: 4.2}, we have 
$$2\left(1-\frac{\sqrt{2}}{2}\right)n+2n^\frac{1}{2}\le x_1+x_2=|X(v_1)\cup X(v_2)|\le |I_0\cup I(v_1)\cup I(v_2)|<|I_0|+2n^\frac 12,$$
the last inequality holds since $|I(v)|<n^\frac 12$ for any good vertex $v\in V(G_u)$.
Therefore, $|I_0|> 2(1-\frac{\sqrt{2}}{2})n$.

If $G_u[E_3]$ contains no $K_3$, then, by Theorem~\ref{THM: Turan}, $$|E_3|\le e(T_{n-1-|I_0|,2})\le\frac{(n-1-|I_0|)^2}{4}<\frac{\left((\sqrt{2}-1)n-1\right)^2}{4}.$$ 
Therefore,
$$d_1(u)=|E(G_u)|=\sum_{i=1}^3|E_i|<\frac{\left((\sqrt{2}-1)n-1\right)^2}{4}+3n+n^\frac{3}{2}\le \frac{3-2\sqrt{2}}{4}n^2+3n^\frac{3}{2}, $$
a contradiction. 
%and again, the fomula $3n+n^\frac{3}{2}$ here denote the number of edges from $E_1$, $E_2$. This also contradicts to the edge size of $G_u$. \\

Now assume $G_u[E_3]$ contains a copy of $K_3$, say $v_1v_2v_3$.  Let $x_i=|X(v_i)|$ for $i=1,2,3$. 
Again by (\ref{EQN: e1}) and Claim~\ref{CLM: 4.2}, we have 
$$3x_3\left(1-\frac{\sqrt{2}}{2}\right)n+3n^\frac{1}{2}\le x_1+x_2+x_3\le |I_0\cup I(v_1)\cup I(v_2)\cup I(v_3)|<|I_0|+3n^\frac 12.$$
Therefore, $|I_0|> 3(1-\frac{\sqrt{2}}{2})n$.
%{\color{red}By the claim above, their private vertex sets are disjoint. Now since $X_1\cup X_2\cup X_3\subset I_0\cup I(v_1) \cup I(v_2) \cup I(v_3)$, and $x_i\ge(1-\frac{\sqrt{2}}{2})n+n^\frac{1}{2}$ ($i=1,2,3$), we have $|I_0|\ge 3(1-\frac{\sqrt{2}}{2})n$,} 
Thus we have 
	$$d_1(u)=|E(G_u)|\le \binom{n-1-|I_0|}{2}<\binom{\frac{3\sqrt{2} -4}{2}n-1}{2}< \frac{3-2\sqrt{2}}{4}n^2+3n^\frac{3}{2}, $$
	a contradiction.
%    where the fomula $3n+n^\frac{3}{2}$ denote the number of edges from $E_1$, $E_2$. And this contradicts to the edge size of $G_u$; \\
%	\textbf{Subcase 2:} No such $v_3$ as described in \textbf{Subcase 1:} \\
%	By Turan's theorem \cite{ref6}, the number of edges in $G_u$ will less than $\frac{1}{4}(n-1-|I_0|)^2$, and similar to the discussion in \textbf{Subcase 1}, here $|I_0|\ge 2(1-\frac{\sqrt{2}}{2})n$, so we have 
%	$$|E(G_u)|\le \frac{1}{4}(n-1-|I_0|)^2+3n+n^\frac{3}{2} \le \frac{1}{4}\left( (\sqrt{2}-1)n-1\right) ^2+3n+n^\frac{3}{2}\le \frac{3-2\sqrt{2}}{4}n^2+3n^\frac{3}{2}, $$
%	and again, the fomula $3n+n^\frac{3}{2}$ here denote the number of edges from $E_1$, $E_2$. This also contradicts to the edge size of $G_u$. \\
	
This completes the proof of Theorem~\ref{THM: main2}. 
\end{proof}

%\section{Discussion and Remarks}

\end{document}